\newtheorem{thm}{Theorem}[section]
\newtheorem{lem}[thm]{Lemma}
\newtheorem{prop}[thm]{Proposition}
\newtheorem{cor}[thm]{Corollary}
\theoremstyle{definition}
\newtheorem{dfn}[thm]{Definition}
\newtheorem{ques}[thm]{Question}
\newtheorem{eg}[thm]{Example}
\newtheorem{rmk}[thm]{Remark}
\theoremstyle{remark}
\newtheorem*{ac}{Acknowlegments}
\newtheorem*{conv}{Convention}
\numberwithin{equation}{thm}
\def\add{\mathsf{add}}
\def\Ann{\mathsf{Ann}}
\def\assh{\operatorname{Assh}}
\def\C{\mathsf{C}}
\def\ca{\mathsf{ca}}
\def\cm{\mathsf{CM}}
\def\Deep{\mathsf{Deep}}
\def\depth{\operatorname{depth}}
\def\dim{\operatorname{dim}}
\def\ext{\operatorname{ext}}
\def\Ext{\operatorname{Ext}}
\def\fl{\operatorname{fl}}
\def\G{\mathsf{G}}
\def\inf{\operatorname{inf}}
\def\m{\mathfrak{m}}
\def\mod{\operatorname{mod}}
\def\NF{\mathrm{NF}}
\def\p{\mathfrak{p}}
\def\q{\mathfrak{q}}
\def\radius{\operatorname{radius}}
\def\res{\operatorname{res}}
\def\S{\mathsf{S}}
\def\Sing{\operatorname{Sing}}
\def\spec{\operatorname{Spec}}
\def\sup{\operatorname{sup}}
\def\supp{\operatorname{Supp}}
\def\Tor{\operatorname{Tor}}
\def\V{\mathrm{V}}
\def\X{\mathcal{X}}
\def\Y{\mathcal{Y}}
\begin{document}
\allowdisplaybreaks
\title[A generalization of the dimension and radius of a subcategory of modules]{A generalization of the dimension and radius of a subcategory of modules and its applications}
\author{Yuki Mifune}
\address{Graduate School of Mathematics, Nagoya University, Furocho, Chikusaku, Nagoya 464-8602, Japan}
\email{yuki.mifune.c9@math.nagoya-u.ac.jp}
\thanks{2020 {\em Mathematics Subject Classification.} 13C60, 13D07}
\thanks{{\em Key words and phrases.} annihilator, dimension, Ext, singular Tor, radius, resolving subcategory}
\begin{abstract}
Let $R$ be a commutative noetherian local ring and denote by $\mod R$ the category of finitely generated $R$-modules. In this paper, we give some evaluations of the singular locus of $R$ and annihilators of Tor and Ext from a viewpoint of the finiteness of dimensions/radii of full subcategories of $\mod R$. As an application, we recover a theorem of Dey and Takahashi when $R$ is Cohen--Macaulay. Moreover, we obtain the divergence of the dimensions of specific full subcategories of $\mod R$ in non-Cohen--Macaulay case.
\end{abstract}
\maketitle
\section{Introduction}
Let $R$ be a commutative noetherian local ring. Denote by $\mod R$ the category of finitely generated $R$-modules. The notions of the dimension and radius of a full subcategory of $\mod R$ have been introduced by Dao and Takahashi \cite{Dao Takahashi dimension, Dao Takahashi radius}. These notions are abelian analogs of the dimension of a triangulated category introduced by Rouquier \cite{Rouquier}, and it has turned out that they can connect several notions appearing in commutative algebra and representation theory.
\par
The concept of a resolving subcategory has been introduced by Auslander and Bridger \cite{AB} in the 1960s and has been studied widely and deeply so far; see \cite{APST, AR, KS, Takahashi 2021, stcm, arg} for instance. Besides, the finiteness of dimension/radius of a resolving subcategory of $\mod R$ also has been studied; see \cite{Dao Takahashi dimension, Dao Takahashi radius, M, Sadeghi Takahashi} for example. Roughly speaking, the dimension/radius of a full subcategory of $\mod R$ is the least number of extensions to build up the subcategory from one object, up to finite direct sums, direct summands, and syzygies. (The precise definition will be recalled in Definition \ref{def of rad}.)
\par
For full subcategories $\X, \Y$ of $\mod R$, we denote by $\Ann \Tor(\X,\Y)$ (respectively, $\Ann \Ext(\X,\Y)$) the intersection of ideals $\Ann_R \Tor_i ^R(X,Y)$ (respectively, $\Ann_R \Ext_R ^i (X,Y)$) of all positive integers $i$ and objects $X \in \X$, $Y \in \Y$. 
When $R$ is Cohen--Macaulay, the relationships among $\cm_i(R)$, $\Ann \Tor(\X,\Y)$, $\Ann \Ext(\X,\Y)$, and the singular locus of $R$ have been studied by Dey and Takahashi \cite{Dey Takahashi}.
In this paper we shall unify the notions of dimension and radius. 
To be more precise, we introduce the {\em radius of $\X$ in $\Y$}, denoted $\radius(\X, \Y)$, which measures how many extensions are necessary to build $\X$ up in $\Y$ up to finite direct sums, direct summands, and syzygies. 
The main result of this paper is the following theorem, which extends a theorem of Dey and Takahashi \cite{Dey Takahashi} to the non-Cohen--Macaulay case. (For more details, see Remark \ref{rmk of cor2}.)
\begin{thm}[Theorem \ref{main thm}]
Let $R$ be a noetherian local ring with residue field $k$ and $\X, \Y$ be full subcategories of $\mod R$ with $\radius(\X,\Y) < \infty$. Assume that $\X$ is closed under extensions and contains $\Omega_R ^n k$ for some $n\geq0$. 
Then both of the closed subsets $\V(\Ann \Tor(\X,\Y))$ and $\V(\Ann \Ext(\X,\Y))$ of $\spec R$ contain the singular locus of $R$ and contained in the nonfree locus of $\Y$. 
\end{thm}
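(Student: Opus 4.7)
The plan is to prove the sandwich $\Sing R \subseteq \V(\Ann \Tor(\X, \Y)) \subseteq \NF(\Y)$ for $\Tor$ (and analogously for $\Ext$ via the corresponding long exact sequences); the two arguments run in parallel so I describe the $\Tor$ case.

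For the upper inclusion, use $\radius(\X, \Y) < \infty$ to fix a generator $G$ (which one can arrange to lie in $\Y$) and a bound $r$ so that $\X$ sits inside the $r$-th level of the radius filtration based at $G$ inside $\Y$. By induction on $r$, I would establish a power inequality of the form $\ca(G)^N \subseteq \Ann \Tor(\X, \Y)$, where $\ca(G) := \bigcap_{i \geq 1} \Ann_R \Tor_i^R(G, -)$ and $N$ depends only on $r$. The induction rests on the long exact sequence of $\Tor$, which yields
$$\Ann \Tor_i^R(A, Y) \cdot \Ann \Tor_i^R(C, Y) \subseteq \Ann \Tor_i^R(B, Y)$$
for each short exact sequence $0 \to A \to B \to C \to 0$, together with invariance of $\ca(-)$ under direct sums, direct summands, and syzygies. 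Since $\V(\ca(G)) = \NF(G) \subseteq \NF(\Y)$, the inclusion $\V(\Ann \Tor(\X, \Y)) \subseteq \NF(\Y)$ follows.

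For the lower inclusion, fix $\p \in \Sing R$; the goal is $\Ann \Tor(\X, \Y) \subseteq \p$. I would argue by contrapositive: if $s \in R \setminus \p$ kills every $\Tor_i^R(X, Y)$ with $X \in \X$, $Y \in \Y$, $i \geq 1$, then localizing at $\p$ makes $s$ a unit in $R_\p$, forcing $\Tor_i^{R_\p}(X_\p, Y_\p) = 0$ for all such $X, Y, i$. Combined with $\Omega^n k \in \X$, the extension closure of $\X$, and the finite radius of $\X_\p$ over $\Y_\p$ based at $G_\p$, this vanishing should impose a uniform bound on the projective dimensions of objects of $\Y_\p$, which propagates through the filtration via the generator to force finiteness of $\pd_{R_\p}$ on each object built in $\X_\p$---incompatible with $R_\p$ being non-regular. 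The $\Ext$ version runs in parallel, substituting the $\Ext$ long exact sequence and controlling injective (or projective) dimension as appropriate.

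The main obstacle is the lower inclusion at non-maximal primes $\p \in \Sing R$. At such $\p$ the localization $(\Omega^n k)_\p$ is $R_\p$-free, so $\Omega^n k$ alone contributes no nontrivial $\Tor$ at $\p$, and the naive pointwise witnessing strategy breaks down. The delicate point is therefore to exploit the \emph{global} pairing structure of $(\X, \Y)$---specifically, that the infinite intersection $\Ann \Tor(\X, \Y)$ can be much smaller than any single $\Ann \Tor_i^R(X, Y)$---so that a Krull-intersection-style collapse under the finite-radius hypothesis forces $\Ann \Tor(\X, \Y) \subseteq \p$ for every $\p \in \Sing R$.
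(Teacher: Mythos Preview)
Your treatment of the upper inclusion $\V(\Ann\Tor(\X,\Y))\subseteq\NF(\Y)$ is essentially the paper's: fix $G$ with $\X\subseteq[G]_{m+1}\subseteq\Y$, use the long exact sequence inductively to get $\Ann\Tor(G,\mod R)^N\subseteq\Ann\Tor([G]_{m+1},\mod R)$, and identify $\V(\Ann\Tor(G,\mod R))=\NF(G)\subseteq\NF(\Y)$. The paper simply cites the corresponding results from Dao--Takahashi rather than reproving them.

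The lower inclusion, however, has a genuine gap that you yourself flag but do not close. Your ``Krull-intersection-style collapse'' is not an argument, and the localization strategy you sketch cannot work for the reason you state: for $\p\ne\m$ the module $(\Omega^n k)_\p$ is free, so nothing in $\X_\p$ witnesses the singularity of $R_\p$. The paper's resolution is entirely different and does not localize the subcategories at all. The two missing ideas are:
\begin{enumerate}
\item[(a)] Since $\X$ is extension-closed and contains $\Omega^n k$, one has $\Omega^n(\fl R)\subseteq\add\X$; and since $\add\X\subseteq[G]_{m+1}\subseteq\Y$, the syzygies of \emph{all} finite-length modules lie in \emph{both} $\X$ (up to summands) and $\Y$. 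Hence $\Ann\Tor(\X,\Y)\subseteq\Ann\Tor(\Omega^n(\fl R),\Omega^n(\fl R))$.
\item[(b)] A result of Dao--Takahashi (the paper's Proposition~\ref{4.5}/\ref{4.6}, resting on their Corollary~4.4): if $a$ annihilates $\Tor(\Omega^m(\fl R),\Omega^n(\fl R))$, then a fixed power $a^{2^{2d}}$ annihilates $\Tor_i^R(M,N)$ for \emph{all} $M,N\in\mod R$ and all $i\gg0$. Now take $M=N=R/\p$ for $\p\in\Sing R$: localizing $\Tor_i^R(R/\p,R/\p)$ at $\p$ gives $\Tor_i^{R_\p}(\kappa(\p),\kappa(\p))$, which is nonzero for infinitely many $i$, forcing $a\in\p$.
\end{enumerate}
Step~(b) is the point you are missing: one does not try to realize $\kappa(\p)$ inside $\X_\p$; instead one bootstraps from annihilation on $\fl R$ to annihilation on all of $\mod R$ \emph{globally over $R$}, and only then specializes to $R/\p$. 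The $\Ext$ case is parallel, using the analogous $\Ext$ version of (b).
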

Denote by $\C(R)$ the full subcategory of $\mod R$ consisting of $R$-modules $M$ such that the inequality $\depth M_{\p} \geq \depth R_{\p}$ holds for all prime ideals $\p$ of $R$. 
This resolving subcategory has been introduced by Takahashi \cite{Takahashi 2023}, and it coincides with the full subcategory $\cm(R)$ of maximal Cohen--Macaulay $R$-modules when $R$ is Cohen--Macaulay. 
As an application of the above theorem, we relate the finiteness of dimensions/radii of specific resolving subcategories to the dimensions of the closed subsets defined by the annihilators of $\Tor$ and $\Ext$, and singular locus of $R$.
\begin{cor} [Corollary \ref{cor1 of thm}]
Let $R$ be a noetherian local ring and $n$ a nonnegative integer. 
Assume that  $\radius(\C_0 (R), \mod_n R) < \infty$. 
Then the dimensions of the closed subsets
$\V(\Ann \Tor(\C_n(R), \C_n(R)))$, 
$\V(\Ann \Ext(\C_n(R), \C_n(R)))$, and
the singular locus of $R$
are less than or equal to $n$.
\end{cor}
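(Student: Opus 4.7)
The plan is to apply Theorem~\ref{main thm} to the pair $(\X, \Y) = (\C_0(R), \C_n(R))$ and translate its conclusion into the desired dimension bounds via the defining property of $\C_n(R)$.

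First I verify the hypotheses of Theorem~\ref{main thm}. Since $\mod_n R \subseteq \C_n(R)$ --- any module of dimension at most $n$ vanishes at all primes $\p$ with $\dim R/\p > n$, so trivially satisfies whatever depth requirement defines $\C_n(R)$ on the rest of $\spec R$ --- monotonicity of $\radius$ in its second argument gives $\radius(\C_0(R), \C_n(R)) \leq \radius(\C_0(R), \mod_n R) < \infty$. The subcategory $\C_0(R)$ is closed under extensions, via the depth lemma together with the fact that a short exact sequence of modules locally free on the punctured spectrum localizes to a split sequence at every non-maximal prime. And $\Omega^m k \in \C_0(R)$ for every $m \geq \depth R$: the depth of this syzygy is at least $\depth R$, and since $k_\p = 0$ at any non-maximal $\p$, localizing a minimal free resolution of $k$ at $\p$ is split exact, making $(\Omega^m k)_\p$ free over $R_\p$.

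Theorem~\ref{main thm} then yields
\[
\Sing R \subseteq \V(\Ann \Tor(\C_0(R), \C_n(R))) \subseteq \NF(\C_n(R)) = \bigcup_{M \in \C_n(R)} \NF(M),
\]
and likewise for $\Ext$. By the definition of $\C_n(R)$, every prime in this union has quotient-dimension at most $n$; the closed set on the left is a subset of this union, so each of its minimal primes sits in $\NF(M)$ for some $M \in \C_n(R)$ and has quotient-dimension at most $n$. Hence $\dim \V(\Ann \Tor(\C_0(R), \C_n(R))) \leq n$, and a fortiori $\dim \Sing R \leq n$; the same reasoning bounds $\dim \V(\Ann \Ext(\C_0(R), \C_n(R)))$.

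For the stated bounds on $\V(\Ann \Tor(\C_n(R), \C_n(R)))$ and $\V(\Ann \Ext(\C_n(R), \C_n(R)))$, observe that for any $M, N \in \C_n(R)$ and $i \geq 1$ the module $M_\p$ is free at any $\p \notin \NF(M)$, so $\Tor_i^R(M, N)$ and $\Ext_R^i(M, N)$ are supported inside $\NF(M)$, which has dimension at most $n$. The main obstacle is that enlarging the first argument from $\C_0(R)$ to $\C_n(R)$ a priori enlarges the closed set $\V(\Ann \Tor)$, and taking the Zariski closure of the union of the individual supports can in general increase dimension; the resolution is to use the finite-radius input to control the annihilator of each Tor/Ext module uniformly in $M$ and $N$, so that the minimal primes of the resulting closed set still lie in $\bigcup_M \NF(M)$ and the same minimal-prime argument delivers the dimension bound.
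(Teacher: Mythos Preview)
Your argument breaks in the first step: the claimed inclusion $\mod_n R \subseteq \C_n(R)$ is false. You have misread $\mod_n R$ as the category of modules of Krull dimension at most $n$; in this paper it denotes modules $M$ with $\dim \NF(M) \le n$ (so for instance $R$ itself lies in $\mod_0 R$). Since by definition $\C_n(R) = \C(R) \cap \mod_n R$, the correct inclusion is $\C_n(R) \subseteq \mod_n R$, and monotonicity in the second variable then yields $\radius(\C_0(R), \C_n(R)) \ge \radius(\C_0(R), \mod_n R)$, which is the wrong direction for your purposes. The fix is simply to apply Theorem~\ref{main thm} with $\Y = \mod_n R$, which is exactly the hypothesis: one obtains
\[
\Sing(R) \subseteq \V(\Ann \Tor(\C_0(R), \mod_n R)) \subseteq \NF(\mod_n R) \subseteq W_n := \{\p \in \spec R : \dim R/\p \le n\},
\]
the last containment being immediate from the definition of $\mod_n R$. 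This already gives $\dim \Sing(R) \le n$.

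Your final paragraph is not a proof. You correctly identify the obstacle---enlarging the first argument from $\C_0(R)$ to $\C_n(R)$ may enlarge the closed set, and an infinite union of closed sets of dimension $\le n$ need not have closure of dimension $\le n$---but the sentence beginning ``the resolution is to use the finite-radius input\ldots'' merely restates the goal. The paper bypasses this difficulty using external input: the note just before the corollary records that the closed sets $\V(\Ann \Ext(\C_m(R), \C_m(R)))$ coincide for all $0 \le m \le d$ (by results of Kimura and Dey--Takahashi), so it suffices to bound the case $m = 0$, and $\V(\Ann \Ext(\C_0(R), \C_0(R))) \subseteq \V(\Ann \Ext(\C_0(R), \mod_n R)) \subseteq W_n$ by the theorem. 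The Tor bound follows from the Ext bound via $\Ann \Ext(\C_n(R), \C_n(R)) \subseteq \Ann \Tor(\C_n(R), \C_n(R))$, valid because $\C_n(R)$ is closed under syzygies (cf.\ Remark~\ref{rmk of cor2}).
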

Here $\C_n(R)$ is the full subcategory of $\C(R)$ consisting of modules $M$ such that the dimension of nonfree locus of $M$ is less than or equal to $n$. 
As a consequence of the above corollary, we obtain the divergence of the dimension of $\C_i(R)$ for $i=0,\cdots,\dim R-1$ when $R$ is non-Cohen--Macaulay.
The following result is included in Corollary \ref{cor2 of thm}.
\begin{cor}
Let $R$ be a $d$-dimensional non-Cohen--Macaulay local ring. 
Then the dimension of $\C_i(R)$ is infinite for all $0\leq i \leq d-1$.
\end{cor}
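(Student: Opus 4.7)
The plan is to argue by contradiction. Suppose $\dim\C_i(R)<\infty$ for some $i$ with $0\le i\le d-1$.

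First I would promote finite dimension to finite radius. Any module in $\C_0(R)$ is free on the punctured spectrum, hence has nonfree locus of dimension zero, so $\C_0(R)\subseteq\C_i(R)$; and by definition $\C_i(R)\subseteq\mod_i R$. If $\dim\C_i(R)=N<\infty$, there is a generator $G\in\C_i(R)$ with $\C_i(R)\subseteq[G]_{N+1}$. Then $\C_0(R)\subseteq[G]_{N+1}$ with $G\in\mod_i R$, which gives $\radius(\C_0(R),\mod_i R)\le N+1<\infty$.

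Next I would apply Corollary~\ref{cor1 of thm} with $n=i$. This produces the three simultaneous bounds
\[
\dim\V(\Ann\Tor(\C_i(R),\C_i(R))),\quad \dim\V(\Ann\Ext(\C_i(R),\C_i(R))),\quad \dim\Sing R\ \le\ i\ \le\ d-1.
\]

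The final step is to contradict the non-Cohen--Macaulay hypothesis, and this is the main obstacle. The weakest of the three bounds, $\dim\Sing R\le d-1$, is by itself not enough: there exist non-CM local rings with low-dimensional singular locus, for instance the one-dimensional ring $k[[x,y]]/(y^2,xy)$, whose singular locus is $\{\mathfrak{m}\}$. The contradiction must therefore come from one of the finer bounds on $\V(\Ann\Tor(\C_i,\C_i))$ or $\V(\Ann\Ext(\C_i,\C_i))$, combined with the failure of a depth condition at a prime of dimension $d$. Concretely, I would try to show that in the non-CM case one can exhibit modules $M,N\in\C_i(R)$ and an integer $j\ge1$ such that $\Ext^j_R(M,N)$ or $\Tor_j^R(M,N)$ has an associated prime $\mathfrak{p}$ with $\dim R/\mathfrak{p}=d$; this forces the corresponding closed set to have dimension $\ge d>i$, contradicting the bound just obtained. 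Producing such a prime---presumably by combining the Auslander--Buchsbaum--style defect $\depth R<\dim R$ with a carefully chosen short exact sequence inside $\C_i(R)$---is the nontrivial ingredient I would expect to require a separate lemma.
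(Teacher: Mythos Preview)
Your reduction to Corollary~\ref{cor1 of thm} is exactly how the paper proceeds, and you have correctly isolated the one nontrivial step that remains: exhibiting a prime $\p$ with $\dim R/\p = d$ inside $\V(\Ann\Ext(\C_i(R),\C_i(R)))$. The paper does not prove this step from scratch either; it quotes \cite[Proposition~2.6]{Kimura}, which states that whenever $R$ is not Cohen--Macaulay,
\[
\assh(R)\ \subseteq\ \V\bigl(\Ann\Ext(\C(R),\C(R))\bigr).
\]
Any $\p\in\assh(R)$ then furnishes a prime of dimension $d$ in this closed set. To transfer this to $\C_i(R)$, the paper also uses (see the paragraph immediately preceding Corollary~\ref{cor1 of thm}) that the closed sets $\V(\Ann\Ext(\C_n(R),\C_n(R)))$ are independent of $n$, a fact drawn from \cite[Proposition~2.4(2)]{Kimura} and \cite[Lemma~3.8]{Dey Takahashi}. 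With Kimura's result in hand your argument is complete and matches the paper's; the ``separate lemma'' you anticipated is precisely that citation, not an ad hoc short exact sequence construction.

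One cosmetic slip: since $\radius(\X,\Y)=\inf\{n\ge0:\X\subseteq[G]_{n+1}\subseteq\Y\}$, the chain $\C_0(R)\subseteq[G]_{N+1}\subseteq\mod_i R$ gives $\radius(\C_0(R),\mod_i R)\le N$, not $N+1$.
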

\par
The organization of this paper is as follows. In section 2 we state basic definitions and their properties for later use. In section 3 we give the proofs of the theorem and corollaries stated above.
\begin{conv}
Throughout this paper, all rings are commutative noetherian local rings, all modules are finitely generated, and all subcategories are full and strict, that is, closed under isomorphism. Let $R$ be a (commutative noetherian local) ring with maximal ideal $\m$ and residue field $k$.
\end{conv}
\section{Preliminaries}
In this section, we recall various definitions and basic properties needed later. We begin with the notion of additive closures and syzygies of modules.
\begin{dfn}
Let $\X$ be a subcategory of $\mod R$ and $M$ an $R$-module. 
\begin{enumerate}[\rm(1)] 
\item
We denote by $\add_R \X$ the {\em additive closure of $\X$}, namely, the subcategory of $\mod R$ consisting of direct summands of finite direct sums of modules in $\X$. When $\X$ consists of a single module $X$, we simply denote $\add_R \{ X\}$ by $\add_R X$. Note that $\add _R R$ consists of (finitely generated) free $R$-modules.
\item
Take a minimal free resolution $\cdots \xrightarrow{\delta_{n+1}} F_n \xrightarrow{\delta_n} F_{n-1} \xrightarrow{\delta_{n-1}}\cdots \xrightarrow{\delta_1}F_0 \xrightarrow{\delta_0} M \to 0$ of $M$. Then for each $n\geq 0$, the image of $\delta_n$ is called the {\em $n$-th syzygy of $M$} and denoted by $\Omega_R ^n M$.
\item
For a nonnegative integer $n$, we denote by $\Omega_R ^n \X$ the subcategory of $\mod R$ consisting of $R$-modules $M$ such  that there exists an exact sequence of the form $0 \to M \to F_{n-1} \to \cdots \to F_0 \to X \to 0 $ where $X \in \X$ and $F_i \in \add_R R$ for all $i \geq 0$. 
\end{enumerate}
\end{dfn}
\begin{rmk}
\begin{enumerate}[\rm(1)]
\item
Let $\X, \Y$ be subcategories of $\mod R$ and $n$ a nonnegative integer. 
If one has $\X \subseteq \Y$, then one has 
$\add_R \X \subseteq \add_R \Y$, and 
$\Omega_R ^n \X \subseteq \Omega_R ^n \Y$.
\item
Let $\X$ be a subcategory of $\mod R$ and $n$ a nonnegative integer. Then one has
\begin{equation}
\Omega_R ^n \X = \nonumber
\begin{cases}
\X & \text{if $n=0$,} \\
\{\Omega_R ^n X \oplus R^{\oplus m} \mid X\in \X, m\geq 0 \}& \text{if $n>0$.} 
\end{cases}
\end{equation}
In particular, if $\X$ contains $0$ and $n>0$, then $\Omega_R ^n \X$ contains free $R$-modules. 
Note that both $\Omega_R ^n \{X\}$ and $\Omega_R ^n X$ are not equal for any $R$-modules $X$ in general.
\end{enumerate}
\end{rmk}
Now we recall the definitions of a resolving subcategory and the resolving/extension closure of a subcategory of $\mod R$.
\begin{dfn}
\begin{enumerate}[\rm(1)]
\item
A subcategory $\X$ of $\mod R$ is {\em resolving} if $\X$ contains $R$ and is closed under direct summands, extensions, and kernels of epimorphisms in $\mod R$. The last two closure properties say that for any exact sequence $0 \to L \to M \to N \to 0$ of $R$-modules with $N\in \X$, we have the equivalence $L \in \X \iff M \in \X$.
\item
For a subcategory $\X$ of $\mod R$, we denote by $\res \X$ the {\em resolving closure} of $\X$, namely, the smallest resolving subcategory of $\mod R$ containing $\X$. When $\X$ consists of a single module $X$, we simply denote $\res \{X\}$ by $\res X$.
\item
For a subcategory $\X$ of $\mod R$, we denote by $\ext \X$ the {\em extension closure} of $\X$, namely, the smallest subcategory of $\mod R$ which contains $\X$ and is closed under direct summands, and extensions in $\mod R$. When $\X$ consists of a single module $X$, we simply denote $\ext \{X\}$ by $\ext X$.
\end{enumerate}
\end{dfn}
\begin{rmk} 
Let $R$ be a local ring with residue field $k$.
\begin{enumerate}[\rm(1)]
\item
Let $\X$ be a resolving subcategory of $\mod R$. Then $\X$ contains (finitely generated) free $R$-modules and closed under syzygies.
\item
Denote by $\fl R$ the subcategory of $\mod R$ consisting of modules of finite length. Since any $R$-module of finite length has composition series, one has $\fl R = \ext k$.
\end{enumerate}
\end{rmk}
Next, we state the definition of the radius of a subcategory in another subcategory. This is a common generalization of the notions of the dimension and radius of a subcategory introduced by Dao and Takahashi \cite{Dao Takahashi dimension, Dao Takahashi radius}.
\begin{dfn} \label{def of rad}
Let $\X, \Y$ be subcategories of $\mod R$ and $M$ an $R$-module.
\begin{enumerate}[\rm(1)]
\item
We denote by $\lbrack \X \rbrack$ the additive closure of the subcategory consisting of $R$ and all modules of the form $\Omega_R ^i X$, where $i\geq0$ and $X\in \X$. When $\X$ consists of a single module $X$, we simply denote $\lbrack \{X\} \rbrack$ by $\lbrack X \rbrack$. 
\item
We denote by $\X \circ \Y$ the subcategory of $\mod R$ consisting of $R$-modules $Z$ such that there exists an exact sequence $0 \to X \to Z \to Y \to 0$ with $X\in \X$ and $Y\in \Y$. We set $\X \bullet \Y = \lbrack\lbrack \X \rbrack \circ \lbrack \Y \rbrack \rbrack$. 
\item
We inductively define the {\em ball of radius $r$ centered at $\X$} as follows.
\begin{equation}
\lbrack\X\rbrack_r = \nonumber
\begin{cases}
\lbrack\X\rbrack & \text{if $r=1$,} \\
\lbrack\X\rbrack_{r-1} \bullet \lbrack\X\rbrack & \text{if $r>1$.}
\end{cases}
\end{equation}
For convention, we set $\lbrack\X\rbrack_0 = 0$.
If $\X$ consists of a single module $G$, then we simply denote $\lbrack\{G\}\rbrack_r$ by $\lbrack G\rbrack_r$ and call it the {\em ball of radius} $r$ centered at $G$.
Note that the operator “$\bullet$” is associative; see \cite[Proposition 2.2]{Dao Takahashi radius}. 
\item
We define the {\em radius} of $\X$ in $\Y$, denoted by $\radius(\X,\Y)$, as the infimum of the nonnegative integers $n$ such that there exists a ball of radius $n+1$ centered at a module containing $\X$ and contained in $\Y$, that is, 
\begin{equation*} 
\radius(\X,\Y)=\inf\{n \geq 0\mid \X \subseteq[G]_{n+1} \subseteq \Y \text{ for some }G\in\mod R\}.
\end{equation*}
\end{enumerate}
\end{dfn}
\begin{rmk}
Let $\X, \Y$ be subcategories of $\mod R$.
\begin{enumerate}[\rm(1)]
\item
The notion of dimensions and radii of subcategories introduced by Dao and Takahashi \cite{Dao Takahashi dimension, Dao Takahashi radius} are expressed as follows.
\begin{itemize}
\item
$\dim \X = \radius(\X, \X)=\inf\{n \geq0 \mid\ X = [G]_{n+1} \text{ for some }G\in \X \} $.
\item
$\radius \X =\radius(\X, \mod R)=\inf \{n\geq0\mid\X\subseteq[G]_{n+1} \text{ for some }G\in\mod R\}$.
\end{itemize}
\item
Let $\X'$ be another subcategory of $\mod R$. If one has $\X \subseteq \X'$, then one has $\radius(\X, \Y) \leq \radius(\X', \Y)$.
\item
Let $\Y'$ be another subcategory of $\mod R$. If one has $\Y \subseteq \Y'$, then one has $\radius(\X, \Y) \geq \radius(\X, \Y')$.
\item
Let $\X, \Y$ be subcategories of $\mod R$. 
If $\Y$ is resolving, then we have 
\begin{center}
$\radius(\X,\Y)=\inf\{n \geq 0\mid \X \subseteq[G]_{n+1} \text{ for some }G\in \Y \}$.
\end{center}
\end{enumerate}
\end{rmk}
Recall that a subset $W$ of $\spec R$ is {\em specialization-closed} if $\p \subseteq \q$ are prime ideals of $R$ and $W$ contains $\p$, then $\q$ belongs to $W$.
\begin{dfn}
\begin{enumerate}[\rm(1)]
\item
Let $\Phi$ be a subset of $\spec R$. The {\em dimension} of $\Phi$, denoted by $\dim \Phi$, is defined as the supremum of nonnegative integers $n$ such that there exists a chain $\p_0 \subsetneq \p_1 \subsetneq \cdots \subsetneq \p_n$ of prime ideals in $\Phi$.
\item
We denote by $\Sing(R)$ the {\em singular locus} of $R$, namely, the set of prime ideals $\p$ of $R$ such that $R_{\p}$ is not regular. We say that $R$ is an {\em isolated singularity} if $\Sing(R) \subseteq \{\m \}$, or equivalently, if $\Sing(R)$ has dimension at most zero. Note that $\Sing(R)$ is a specialization-closed subset of $\spec R$.
\item
Let $\X$ be a subcategory of $\mod R$ and $M$ an $R$-module.
Denote by $\NF (M)$ the {\em nonfree locus} of M, namely, the set of prime ideals $\p$ of $R$ such that $M_{\p}$ is not a free $R_{\p}$-module.
In addition, the {\em nonfree locus} of  $\X$ is defined by 
$\NF(\X) = \displaystyle\bigcup _{X\in \X} \NF(X)$.
\end{enumerate}
\end{dfn}
\begin{rmk}
\begin{enumerate}[\rm(1)]
\item
A subset $W$ of $\spec R$ is specialization-closed if and only if W is a union of closed subsets of $\spec R$ in the Zariski topology.
\item
If $W$ is a specialization-closed subset of $\spec R$, then one has $\dim W = \sup \{\dim R/\p \mid \p \in W \}$.
\item
Let $\X \subseteq \Y$ be subcategories of $\mod R$. Then one has $\NF(\X) \subseteq \NF(\Y)$.
\item
Let $M$ be an $R$-module. 
Then one has $\NF(M) = \supp \Ext_R ^1(M, \Omega_R M)$ by \cite[Proposition 2.10]{Takahashi 2009}. Hence $\NF(M)$ is a closed subset of $\spec R$.
\item
Let $\X$ be a subcategory of $\mod R$. 
Then for any positive integer $n$, the equalities
\begin{center}
$\NF(\X) = \NF([\X]_n) = \NF(\res \X)$
\end{center}
hold. This follows from \cite[Corollary 3.6]{Takahashi 2009}.
\item
Let $n$ be a nonnegative integer and $W$ a specialization-closed subset of $\spec R$. Let $W_n$ be the set of prime ideals $\p$ of $R$ such that $\dim R/\p \leq n$. Then one has $W \subseteq W_n \iff \dim W \leq n.$
\end{enumerate}
\end{rmk}
We close this section by defining some specific subcategories of $\mod R$, and state basic properties of them. Notation is based on \cite{Dao Eghbali Lyle, Takahashi 2023}.
\begin{dfn}
Let $n$ be a nonnegative integer.
\begin{enumerate}[\rm(1)]
\item
Denote by $\mod_n R$ the subcategory of $\mod R$ consisting of $R$-modules $M$ such that the inequality $\dim \NF(M) \leq n$ holds.
\item
Denote by $\Deep (R)$ the subcategory of $\mod R$ consisting of $R$-modules $M$ such that the inequality $\depth M \geq \depth R$ holds. 
We set $\Deep_n (R) = \Deep (R) \cap \mod_n R$.
\item
Denote by $\C(R)$ the subcategory of $\mod R$ consisting of $R$-modules such that the inequality $\depth M_{\p} \geq \depth R_{\p}$ holds for all prime ideals $\p$ of $R$.
We set $\C_n (R) = \C (R) \cap \mod_n R$.
\end{enumerate}
\end{dfn}
\begin{rmk}
Let $R$ be a $d$-dimensional local ring.
\begin{enumerate}[\rm(1)]
\item
If $R$ is Cohen--Macaulay, then the subcategories $\Deep (R)$ and $\C(R)$ coincide with the category $\cm(R)$ of maximal Cohen--Macaulay $R$-modules.
\item
All the subcategories $\mod_n R$, $\Deep (R)$, $\Deep_n (R)$, $\C(R)$, $\C_n (R)$ are resolving subcategories of $\mod R$.
\item
Since the modules in $\mod_0 R$ are locally free on the punctured spectrum of $R$, the equality $\Deep_0 (R) = \C_0 (R)$ holds. The relationships among subcategories defined above are as follows. 
\begin{equation*}
\xymatrix@C=4pt@R=6pt{
\mod_0 R & \subseteq &  \mod_1 R & \subseteq & \cdots & \subseteq & \mod_d R & = & \mod R \\
\rotatebox{90}{$\subseteq$} & & \rotatebox{90}{$\subseteq$} & & &  & \rotatebox{90}{$\subseteq$}& & \rotatebox{90}{$\subseteq$} \\
\Deep_0 (R) & \subseteq & \Deep_1 (R) & \subseteq & \cdots & \subseteq & \Deep_d (R) & = & \Deep(R) \\
\rotatebox{90}{$=$} & & \rotatebox{90}{$\subseteq$} & & &  & \rotatebox{90}{$\subseteq$}& & \rotatebox{90}{$\subseteq$} \\
\C_0 (R) & \subseteq & \C_1(R) & \subseteq & \cdots & \subseteq & \C_d(R) & = & \C(R)
}
\end{equation*}
\end{enumerate}
\end{rmk}
\section{Proof of main theorem}
The main purpose of this section is to state and prove Theorem \ref{main thm}. First of all, we introduce some notation for convenience.
\begin{dfn}
Let $\X, \Y$ be subcategories of $\mod R$.
We define $\Tor(\X,\Y)$ and $\Ext(\X,\Y)$ by
$$
\Tor(\X,\Y)=\bigoplus_{X\in\X,Y\in\Y,i>0}\Tor_i^R(X,Y),\qquad
\Ext(\X,\Y)=\bigoplus_{X\in\X,Y\in\Y,i>0}\Ext_R^i(X,Y).
$$
When $\X$ (respectively, $\Y$) consists of a single module $X$ (respectively, $Y$), we simply denote them by $\Tor(X,\Y)$ and $\Ext(X,\Y)$ (respectively, $\Tor(\X,Y)$ and $\Ext(\X,Y)$).
\end{dfn}
\begin{rmk}
Let $\X \subseteq \X'$ and $\Y \subseteq \Y'$ be subcategories of $\mod R$. Then the following inclusion relations hold.
\begin{itemize}
\item
$\V (\Ann \Tor(\X,\Y)) \subseteq \V (\Ann \Tor(\X',\Y')) $.
\item
$\V (\Ann \Ext(\X,\Y)) \subseteq \V (\Ann \Ext(\X',\Y')) $.
\end{itemize}
\end{rmk}
We need the following lemma which is proved by Dao and Takahashi.
\begin{lem}\cite[Corollary 4.4]{Dao Takahashi dimension} \label{4.4}
Let $R$ be a $d$-dimensional local ring and $a\in R$, $n\geq0$. Then the following hold.
\begin{enumerate}[\rm(1)]
\item
Suppose that 
$a\Tor_i ^R(\fl R, \fl R) = 0$ for all $n-2d\leq i\leq n$. 
Then one has $a^{2^{2d}}\Tor_n ^R(\mod R,\mod R) = 0$.
\item
Suppose that 
$a\Ext_R ^i(\fl R, \fl R) = 0$ for all $n\leq i\leq n+2d$. 
Then one has $a^{2^{2d}}\Ext_R ^n(\mod R,\mod R) = 0$.
\end{enumerate}
\end{lem}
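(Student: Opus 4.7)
The plan is to proceed by induction on $d=\dim R$. The base case $d=0$ is trivial: $\mod R=\fl R$ reduces the conclusion to the hypothesis, with exponent $2^{0}=1$. For the inductive step, I would prove a strengthened two-parameter statement tracking exponent and valid index range jointly: given the hypothesis on $[n-2d,n]$, one has $a^{2^{d_1+d_2}}\Tor_j(M,N)=0$ whenever $\dim M\leq d_1$, $\dim N\leq d_2$, and $j$ lies in a subinterval of width $2d-d_1-d_2+1$ of $[n-2d,n]$. Taking $(d_1,d_2)=(d,d)$ shrinks the range to the single index $j=n$ and yields the required exponent $2^{2d}$.

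The inductive step drops the dimension of one argument at a time. For a chosen module $L$ of positive dimension, I would use two short exact sequences: first
$$0\to\Gamma_{\m}(L)\to L\to L'\to 0$$
to isolate the finite-length part $\Gamma_{\m}(L)$ (so that $\depth L'\geq 1$ if $L'\neq 0$), and then
$$0\to L'\xrightarrow{x}L'\to L'/xL'\to 0$$
for a nonzerodivisor $x\in\m$ on $L'$, which lowers the dimension by one. Applying the long exact sequence of $\Tor_{\bullet}(-,N)$ (or symmetrically of $\Tor_{\bullet}(M,-)$) combines annihilation at two consecutive indices into annihilation at a single index, doubling the exponent of $a$ and consuming one index from the valid range. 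After $d_1+d_2\leq 2d$ total reductions one arrives at exponent $2^{2d}$.

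The hard part will be the parameter-multiplication step, where the long exact sequence yields only a containment $a^{c}\Tor_j(L',N)\subseteq x\Tor_j(L',N)$, not an outright annihilation. Iterating the containment gives $a^{kc}\Tor_j(L',N)\subseteq x^{k}\Tor_j(L',N)$, and turning this into a genuine uniform bound requires controlling the nilpotency of $x$ on the Tor module. I would arrange the order of reductions so that at the moment of the $x$-multiplication step the opposite argument has already been reduced to $\fl R$; then $\Tor_j(L',N)$ itself lies in $\fl R$, so $\m$ (and hence $x$) acts nilpotently. Bounding the nilpotency index by something depending only on $d$, rather than on the particular modules, is the central difficulty and demands careful interleaving of the two short exact sequences.

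The $\Ext$ case goes by a dual argument: the long exact sequences for $\Ext^{\bullet}(-,N)$ and $\Ext^{\bullet}(N,-)$ shift indices in the opposite direction, which explains why the hypothesis in (2) is stated on $[n,n+2d]$ instead of $[n-2d,n]$. The same two-parameter induction and exponent bookkeeping then yield $a^{2^{2d}}\Ext^{n}_R(\mod R,\mod R)=0$.
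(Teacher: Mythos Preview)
The paper does not prove this lemma at all; it is quoted verbatim as \cite[Corollary 4.4]{Dao Takahashi dimension} and used as a black box. So there is no in-paper argument to compare against, and the question is simply whether your sketch can be completed.

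Your outline has the right architecture and you have honestly located the obstruction, but the obstruction is fatal for the method as written. From the long exact sequence of $0\to L'\xrightarrow{x}L'\to L'/xL'\to 0$ you only obtain $a^{c}T\subseteq xT$ and $a^{c}\ker(x\mid_T)=0$ for $T=\Tor_j(L',N)$; iterating gives $a^{kc}T\subseteq x^{k}T$, and turning this into $a^{\,\text{bounded}}T=0$ requires $x^{k}T=0$ for a $k$ that is uniform in $L',N$. Your proposed remedy, arranging that $N\in\fl R$ at this moment, does not achieve uniformity: even then the nilpotency exponent of $x$ on $T$ is governed by the length (or the annihilator) of $N$, not by $d$. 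One can sidestep this in the first pass by replacing $x$ by $y=x^{k}$ with $\m^{k}N=0$, so that $y$ already kills every $\Tor_j(L',N)$ and the long exact sequence splits into short ones with $\Tor_j(L',N)\hookrightarrow\Tor_j(L'/yL',N)$; this cleanly reduces $\dim L'$ without losing control of the exponent.

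The real failure is in the second pass. After establishing annihilation for $\Tor_j(\mod R,\fl R)$ you must now lift the second variable from $\fl R$ to $\mod R$ with the first variable an \emph{arbitrary} $M\in\mod R$. At that stage there is no power of your nonzerodivisor on $N'$ that annihilates $M$ (or $\Tor_j(M,N')$), so neither the nilpotency argument nor the $y=x^{k}$ trick is available, and the ``careful interleaving'' you allude to has nothing to interleave with. This is a genuine gap, not a bookkeeping issue: within the $\Gamma_{\m}$/multiplication-by-$x$ framework there is no step that reduces $\dim N$ by one while keeping the exponent bounded independently of $M,N$. The Dao--Takahashi proof avoids this trap by a different reduction scheme; if you want to reconstruct it, look at how they pass from arbitrary modules to cyclic modules $R/\p$ and then descend along the height of $\p$, rather than trying to kill a parameter on a fixed module.
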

The lemma below is shown similarly as in the proof of \cite[Proposition 4.5]{Dao Takahashi dimension}.
\begin{lem} \label{lem for 4.5}
Let $X,Y$ be $R$-modules and $F$ a minimal free resolution of $X$. 
For any $R$-module $M$ and nonnegative integer $i$, we set $\mathfrak{a}_M ^i = \Ann \Ext_R ^i(Y,M)$. 
Then for all $i, j \geq 0$, one has 
\begin{center}
$\mathfrak{a}_X ^i \supseteq \mathfrak{a}_{\Omega_R ^j X} ^{i+j} \mathfrak{a}_{F_0} ^i \cdots \mathfrak{a}_{F_{j-1}} ^i$.
\end{center}
\end{lem}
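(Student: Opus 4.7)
The plan is to induct on $j$, mirroring the argument of \cite[Proposition 4.5]{Dao Takahashi dimension}. The base case $j = 0$ is immediate, since the empty product on the right reduces both sides to $\mathfrak{a}_X^i$.

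The inductive engine is the short exact sequence
$$0 \to \Omega_R X \to F_0 \to X \to 0$$
extracted from the minimal free resolution $F$. Applying $\Hom_R(Y,-)$ produces a long exact sequence
$$\cdots \to \Ext_R^i(Y, F_0) \xrightarrow{\phi} \Ext_R^i(Y, X) \xrightarrow{\delta} \Ext_R^{i+1}(Y, \Omega_R X) \to \cdots,$$
and a direct diagram chase yields the key single-step inclusion $\mathfrak{a}_X^i \supseteq \mathfrak{a}_{\Omega_R X}^{i+1} \mathfrak{a}_{F_0}^i$: given $\xi \in \Ext_R^i(Y, X)$, $a \in \mathfrak{a}_{\Omega_R X}^{i+1}$, and $b \in \mathfrak{a}_{F_0}^i$, one has $\delta(a\xi) = a\, \delta(\xi) = 0$, so $a\xi = \phi(\eta)$ for some $\eta \in \Ext_R^i(Y, F_0)$, and then $ba\xi = \phi(b\eta) = \phi(0) = 0$.

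For the inductive step, I would apply the lemma to $\Omega_R X$, whose minimal free resolution is the truncated complex $\cdots \to F_2 \to F_1 \to \Omega_R X \to 0$, and then chain the resulting inclusion with the $j = 1$ inclusion above. Iterating this step $j$ times produces the desired containment $\mathfrak{a}_X^i \supseteq \mathfrak{a}_{\Omega_R^j X}^{i+j} \mathfrak{a}_{F_0}^i \cdots \mathfrak{a}_{F_{j-1}}^i$.

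The diagram chase is entirely routine; the only real subtlety lies in the careful bookkeeping of the superscripts on the $\mathfrak{a}_{F_k}$-factors as one iterates, since each application of the short exact sequence shifts the Ext degree by one and one must confirm that the product of ideals assembled by the induction matches the expression in the statement.
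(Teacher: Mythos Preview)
Your approach is exactly the paper's: derive the single-step inclusion $\mathfrak{a}_{\Omega_R^l X}^i \supseteq \mathfrak{a}_{\Omega_R^{l+1}X}^{\,i+1}\,\mathfrak{a}_{F_l}^i$ from the short exact sequence $0\to\Omega_R^{l+1}X\to F_l\to\Omega_R^l X\to 0$ and then iterate over $l=0,\dots,j-1$. The paper phrases the iteration as ``take $l=0,\dots,j-1$'' rather than as a formal induction, but the content is identical.

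Regarding the bookkeeping you flagged: you are right to be cautious, and in fact neither your iteration nor the paper's produces the displayed formula literally. To chain the single-step inclusions one must replace $i$ by $i+l$ at the $l$-th step, which yields
\[
\mathfrak{a}_X^i \;\supseteq\; \mathfrak{a}_{\Omega_R^j X}^{\,i+j}\,\mathfrak{a}_{F_0}^{\,i}\,\mathfrak{a}_{F_1}^{\,i+1}\cdots\mathfrak{a}_{F_{j-1}}^{\,i+j-1},
\]
with increasing superscripts on the free-module factors rather than the constant superscript $i$ stated in the lemma. There is no general containment between $\mathfrak{a}_{F_l}^{\,i}$ and $\mathfrak{a}_{F_l}^{\,i+l}$, so the stated version does not follow from this one. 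This is a harmless slip in the paper: in the only place the lemma is invoked (Proposition~\ref{4.5}(\ref{4.5 ext})), the element $a$ is assumed to annihilate $\Ext_R^l(\Omega_R^m M,F_k)$ for \emph{all} $l>0$, so either version of the product suffices.
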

\begin{proof}
Fix $l\geq 0$. Since $F$ is a minimal free resolution of $X$, there exists an exact sequence
\begin{center}
$0 \to \Omega_R ^{l+1} X \to F_l \to \Omega_R ^l X \to 0$
\end{center}
of $R$-modules. Applying the functor $\Ext_R (Y,-)$, one has an exact sequence
\begin{center}
$\Ext_R ^i(Y, F_l) \to \Ext_R ^i(Y, \Omega_R ^l X) \to \Ext_R ^{i+1}(Y, \Omega_R ^{l+1} X)$.
\end{center}
This yields an inclusion
$\mathfrak{a}_{\Omega_R ^l X} \supseteq \mathfrak{a}_{\Omega_R ^{l+1} X} ^{i+1} \mathfrak{a}_{F_l} ^i$ of ideals. 
Take $l=0, \cdots ,j-1$, then we have $\mathfrak{a}_X ^i \supseteq \mathfrak{a}_{\Omega_R ^j X} ^{i+j} \mathfrak{a}_{F_0} ^i \cdots \mathfrak{a}_{F_{j-1}} ^i$.
\end{proof}
The next proposition says that the annihilations of $\Tor$ and $\Ext$ are controlled by the annihilations of $\Tor$ and $\Ext$ on the syzygies of modules of finite length. 
The restriction to the case where R is Cohen--Macaulay recovers \cite[Proposition 4.5]{Dao Takahashi dimension}.
\begin{prop} \label{4.5}
Let $R$ be a $d$-dimensional local ring and $n,m$ be nonnegative integers.
\begin{enumerate}[\rm(1)]
\item \label{4.5 tor}
If $a \in \Ann \Tor( \Omega_R ^m (\fl R), \Omega_R ^n (\fl R))$, then one has $a^{2^{2d}}\Tor_i ^R (\mod R, \mod R) = 0$ for all $i>m+n+2d$.
\item \label{4.5 ext}
If $a \in \Ann \Ext( \Omega_R ^m (\fl R), \Omega_R ^n (\fl R))$, then one has $a^{2^{2d}(n+1)}\Ext_R ^i (\mod R, \mod R) = 0$ for all $i>m$.
\end{enumerate}
\end{prop}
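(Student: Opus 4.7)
For part (1), the strategy is standard dimension shifting for $\Tor$: iterating the long exact sequence arising from the minimal free cover $0\to\Omega_R U\to F\to U\to 0$ in both arguments yields $\Tor_i^R(\Omega_R^m X,\Omega_R^n Y)\cong\Tor_{i+m+n}^R(X,Y)$ for every $i\ge 1$ and every $X,Y\in\mod R$. Specialising to $X,Y\in\fl R$, the hypothesis becomes $a\Tor_k^R(\fl R,\fl R)=0$ for every $k\ge m+n+1$. I would then invoke Lemma \ref{4.4}(1) for every $n'>m+n+2d$: the range $[n'-2d,n']$ sits inside $[m+n+1,\infty)$, so $a^{2^{2d}}\Tor_{n'}^R(\mod R,\mod R)=0$, as claimed.

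For part (2), I would start with the analogous shift on the first argument of $\Ext$: $\Ext_R^i(\Omega_R^m X,\Omega_R^n Y)\cong\Ext_R^{i+m}(X,\Omega_R^n Y)$ for $i\ge 1$, which turns the hypothesis into $a\Ext_R^k(X,\Omega_R^n Y)=0$ whenever $X,Y\in\fl R$ and $k\ge m+1$. Unlike the $\Tor$ situation, a shift on the second argument is not clean, since one collects error terms from the free modules in a resolution of $Y$. The pivotal trick is that when $n\ge 1$ the description $\Omega_R^n\fl R=\{\Omega_R^n Y\oplus R^{\oplus\ell}\mid Y\in\fl R,\ \ell\ge 0\}$ already contains $R$ itself (take $Y=0\in\fl R$); the hypothesis therefore also yields $a\Ext_R^k(X,R)=0$ for $X\in\fl R$ and $k\ge m+1$, and so $a$ annihilates $\Ext_R^k(X,G_l)$ for every free module $G_l$.

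Combining these inputs, I would apply Lemma \ref{lem for 4.5} with the roles of $X$ and $Y$ swapped and $j=n$ to the minimal free resolution $G_\bullet$ of $Y\in\fl R$, obtaining
\[
\Ann\Ext_R^k(X,Y)\;\supseteq\;\Ann\Ext_R^{k+n}(X,\Omega_R^n Y)\cdot\prod_{l=0}^{n-1}\Ann\Ext_R^{k+l}(X,G_l).
\]
For $k\ge m+1$ each of the $n+1$ ideals on the right contains $a$, so $a^{n+1}\Ext_R^k(\fl R,\fl R)=0$ for all $k\ge m+1$. Finally, Lemma \ref{4.4}(2) applied to the element $a^{n+1}$ in place of $a$ gives $a^{2^{2d}(n+1)}\Ext_R^i(\mod R,\mod R)=0$ for every $i>m$; the case $n=0$ needs no free-module input and follows from Lemma \ref{4.4}(2) directly. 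The main obstacle is part (2): controlling the annihilators of $\Ext_R^{k+l}(X,G_l)$, and the slick realisation that free modules already lie in $\Omega_R^n(\fl R)$ for $n\ge 1$ is precisely what absorbs those error terms into the hypothesis at the cost of the single extra factor $(n+1)$ in the exponent.
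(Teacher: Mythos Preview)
Your proposal is correct and follows essentially the same route as the paper. Part~(1) is identical: dimension-shift in both arguments to reduce to $a\Tor_k^R(\fl R,\fl R)=0$ for $k\ge m+n+1$, then invoke Lemma~\ref{4.4}(1). In Part~(2) the paper applies Lemma~\ref{lem for 4.5} with $\Omega_R^m M$ still in the first slot and only afterwards shifts back to $M$, whereas you shift the first argument at the outset and then apply the lemma; both orders lead to $a^{n+1}\Ext_R^k(\fl R,\fl R)=0$ for $k>m$ via exactly the same key observation that the free modules $F_k$ (your $G_l$) already lie in $\Omega_R^n(\fl R)$ for $n\ge1$, so the hypothesis absorbs those terms.
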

\begin{proof}
(\ref{4.5 tor}) 
Fix $i>m+n+2d$. 
Then for all $i-2d \leq l \leq i$, we have $a\Tor_l ^R(\fl R, \fl R) = a\Tor_{l-(m+n)} ^R(\Omega_R ^m (\fl R), \Omega_R ^n (\fl R)) = 0$. 
Hence by Lemma \ref{4.4}, one has $a^{2^{2d}}\Tor_i ^R (\mod R, \mod R) = 0$.
\\
(\ref{4.5 ext})
Let $M,N$ be $R$-modules of finite length. 
Take a minimal free resolution $F$ of $N$. 
By Lemma \ref{lem for 4.5}, we have the inclusion
\begin{equation} \label{4.5 inclusion}
\Ann \Ext_R ^l(\Omega_R ^m M, N) \supseteq
\Ann\Ext_R ^{l+n}(\Omega_R ^m M, \Omega_R ^n N) 
\displaystyle \prod_{k=0} ^{n-1} \Ann\Ext_R ^l (\Omega_R ^m M, F_k) 
\end{equation}
of ideals for all $l>0$. 
Since $\Omega_R ^n N, F_0, \cdots, F_{n-1}$ belong to $\Omega_R ^n (\fl R)$, the right-hand side of \ref{4.5 inclusion} has $a^{n+1}$.
Hence, we have $a^{n+1}\Ext(\Omega_R ^m M, N)=0$. 
Fix $i>m$.
Then for all $i\leq l \leq i+2d$, one has
$a^{n+1}\Ext_R ^l (M, N)=a^{n+1}\Ext_R ^{l-m} (\Omega_R ^m M, N)=0 $.
Thus by Lemma \ref{4.4}, we have $a^{2^{2d}(n+1)}\Ext_R ^i (\mod R, \mod R) = 0$.
\end{proof}
As a consequence of the above proposition, the singular locus of a local ring $R$ is contained in closed subsets defined by the annihilators of $\Tor$ and $\Ext$ on the syzygies of $R$-modules of finite length. 
Restricting this result to the case where R is Cohen--Macaulay, we can recover \cite[Proposition 4.6]{Dao Takahashi dimension}.
\begin{prop} \label{4.6}
Let $R$ be a $d$-dimensional local ring.
Then the following inclusion relation holds.
\begin{center}
$\Sing(R) \subseteq 
\left( \displaystyle\bigcap_{m,n\geq 0} \V (\Ann \Tor( \Omega_R ^m (\fl R), \Omega_R ^n (\fl R))) \right)
\cap
\left( \displaystyle\bigcap_{m,n\geq 0} \V (\Ann \Ext( \Omega_R ^m (\fl R), \Omega_R ^n (\fl R))) \right).
$
\end{center}
\end{prop}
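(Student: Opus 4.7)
The plan is to prove both inclusions by contraposition, using Proposition \ref{4.5} together with Serre's characterization of regular local rings (that $R_\p$ is regular iff $k(\p)$ has finite projective dimension over $R_\p$, equivalently iff $\Tor^{R_\p}_i(k(\p),k(\p))=0$ for all $i\gg0$, and likewise for $\Ext$).

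Fix $\p\in\Sing(R)$ and arbitrary $m,n\geq0$. To show $\p\in\V(\Ann\Tor(\Omega_R^m(\fl R),\Omega_R^n(\fl R)))$, I need to verify the ideal $\Ann\Tor(\Omega_R^m(\fl R),\Omega_R^n(\fl R))$ is contained in $\p$. Suppose for contradiction there is $a\notin\p$ in this annihilator. Then Proposition \ref{4.5}(1) gives $a^{2^{2d}}\Tor^R_i(\mod R,\mod R)=0$ for all $i>m+n+2d$. Localizing at $\p$, and using that $\Tor^R_i(M,N)_\p\cong\Tor^{R_\p}_i(M_\p,N_\p)$ for finitely generated $M,N$, together with the fact that $a$ (hence $a^{2^{2d}}$) is a unit in $R_\p$, yields $\Tor^{R_\p}_i(M_\p,N_\p)=0$ for all such $i$ and all $M,N\in\mod R$. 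Specializing to $M=N=R/\p$ gives $\Tor^{R_\p}_i(k(\p),k(\p))=0$ for all $i>m+n+2d$, so $R_\p$ is regular by Serre's theorem, contradicting $\p\in\Sing(R)$.

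The argument for Ext is entirely parallel. Assuming $a\notin\p$ lies in $\Ann\Ext(\Omega_R^m(\fl R),\Omega_R^n(\fl R))$, Proposition \ref{4.5}(2) produces a power of $a$ that annihilates $\Ext^i_R(\mod R,\mod R)$ for all $i>m$. Localizing at $\p$ (using $\Ext^i_R(M,N)_\p\cong\Ext^i_{R_\p}(M_\p,N_\p)$ for $M$ finitely generated) and specializing to $M=N=R/\p$ yields $\Ext^i_{R_\p}(k(\p),k(\p))=0$ for all $i>m$, again forcing $R_\p$ regular, a contradiction.

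I do not anticipate a genuine obstacle: the heavy lifting has already been done in Proposition \ref{4.5}, and what remains is the standard passage from the ``annihilator of bounded-index Tor/Ext over $R$'' to ``vanishing of all-high-index Tor/Ext over $R_\p$'' via inverting a unit, followed by invoking Serre's criterion. The only point deserving a brief verification is that localization commutes with the relevant $\Tor$ and $\Ext$ (which holds because the first argument is finitely generated), and that $R/\p$ is a finitely generated $R$-module whose localization at $\p$ is $k(\p)$, which is immediate.
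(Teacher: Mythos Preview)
Your proposal is correct and follows essentially the same route as the paper's own proof: apply Proposition \ref{4.5} to an element $a$ of the annihilator, localize the resulting annihilation of high $\Tor$/$\Ext$ at $\p$ with $M=N=R/\p$, and invoke Serre's criterion to force $R_\p$ regular unless $a\in\p$. The only cosmetic difference is that the paper phrases the argument as ``take $a$ in the annihilator and show $a\in\p$'' rather than as a contradiction from $a\notin\p$, which amounts to the same thing.
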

\begin{proof}
Let $m,n$ be nonnegative integers. 
We shall show that 
$\Sing(R) \subseteq \V (\Ann \Tor( \Omega_R ^m (\fl R), \Omega_R ^n (\fl R)))$. 
Let $\p$ be any prime ideal of $\Sing(R)$ and $a$ an element of $ \Ann \Tor( \Omega_R ^m (\fl R), \Omega_R ^n (\fl R))$.
Then by Proposition \ref{4.5}, we have $a^{2^{2d}}\Tor_i ^R (\mod R, \mod R) = 0$ for all $i>m+n+2d$. 
Hence one has 
$0= a^{2^{2d}}\Tor_i ^R (R/\p, R/\p)_{\p} \cong 
a^{2^{2d}}\Tor_i ^{R_{\p}} (\kappa (\p), \kappa (\p)) $ for all $i>m+n+2d$.
If $a$ does not belong to $\p$, 
one has $\Tor_i ^{R_{\p}} (\kappa (\p), \kappa (\p)) = 0$ for all $i>m+n+2d$.
This implies that $R_{\p}$ is regular, and it contradicts the choice of $\p$. 
Hence, we have $a\in \p$. The assertion for $\Ext$ is also proved similarly.
\end{proof}
\begin{rmk}
Let $R$ be as in Proposition \ref{4.6}.
Fix an integer $n\geq0$. 
Since $R$ is noetherian, the ascending chain of ideals
\begin{center}
$\Ann \Tor(\fl R, \Omega_R ^n (\fl R)) 
\subseteq \Ann \Tor( \Omega_R  (\fl R), \Omega_R ^n (\fl R))
\subseteq \Ann \Tor( \Omega_R ^2 (\fl R), \Omega_R ^n (\fl R))
\subseteq \cdots $ 
\end{center}
of $R$ stabilizes. 
Hence so does the descending chain of closed subsets 
\begin{center}
$\V(\Ann \Tor(\fl R, \Omega_R ^n (\fl R)))
\supseteq \V(\Ann \Tor( \Omega_R  (\fl R), \Omega_R ^n (\fl R)))
\supseteq \V(\Ann \Tor( \Omega_R ^2 (\fl R), \Omega_R ^n (\fl R)))
\supseteq \cdots $.
\end{center}
of $\spec R$. 
Thus, there exists an integer $s_n \geq 0$ such that 
\begin{center}
$\displaystyle\bigcap_{m\geq 0} \V (\Ann \Tor( \Omega_R ^m (\fl R), \Omega_R ^n (\fl R))) 
= \V (\Ann \Tor( \Omega_R ^{s_n} (\fl R), \Omega_R ^n (\fl R)))$.
\end{center}
The equalities of ideals and closed subsets for $\Tor(\Omega_R ^n (\fl R),-)$ and $\Ext(-, \Omega_R ^n (\fl R))$ also hold. \par
Let $\X, \Y$ be subcategories of $\mod R$. 
Assume that $\Y$ contains $0$. 
Then by Lemma \ref{lem for 4.5}, we have 
$\Ann \Ext(\X,\Omega_R ^m \Y) \subseteq \sqrt{\Ann \Ext(\X,\Y)}$ 
for all $m \geq 0$. 
Thus, we have an ascending chain of closed subsets 
\begin{center}
$\V (\Ann \Ext(\X,\Y)) 
\subseteq \V (\Ann \Ext(\X,\Omega_R \Y)) 
\subseteq \V (\Ann \Ext(\X,\Omega_R ^2 \Y)) 
\subseteq \cdots$
\end{center}
of $\spec R$. In particular, one has 
\begin{center}
$\displaystyle\bigcap_{m\geq 0} \V (\Ann \Ext( \Omega_R ^n (\fl R), \Omega_R ^m (\fl R))) 
= \V (\Ann \Ext( \Omega_R ^n (\fl R), \fl R))$.
\end{center}
\end{rmk}
Now we have reached the main result of this section.
\begin{thm} \label{main thm}
Let $R$ be a local ring with residue field $k$ and $\X, \Y$ be subcategories of $\mod R$ with $\radius(\X,\Y) < \infty$. Assume that $\X$ is closed under extensions and contains $\Omega_R ^n k$ for some $n\geq0$.  Then the following inclusion relations hold. 
\begin{itemize}
\item
$\Sing(R) \subseteq \V(\Ann \Tor(\X,\Y)) \subseteq \NF(\Y)$.
\item
$\Sing(R) \subseteq \V(\Ann \Ext(\X,\Y)) \subseteq \NF(\Y)$.
\end{itemize}
\end{thm}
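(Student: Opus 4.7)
The plan is to handle the two pairs of inclusions separately: the ``$\Sing(R)\subseteq\V(\cdot)$'' parts rely on the hypothesis $\syz^n k\in\X$, while the ``$\V(\cdot)\subseteq\NF(\Y)$'' parts use only the finiteness of $\radius(\X,\Y)$. Throughout, set $r=\radius(\X,\Y)$, $d=\dim R$, and fix $G\in\mod R$ with $\X\subseteq[G]_{r+1}\subseteq\Y$; note that $G\in\Y$, so $\NF(G)\subseteq\NF(\Y)$.

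For the singular-locus containments, the key initial step is to transfer the hypothesis on $\X$ into a statement about $\syz^n(\fl R)$. Any $M\in\fl R=\ext k$ admits a composition series with factors $\cong k$, and iterating the horseshoe lemma along it (resolving $k$ minimally and each $M_{i-1}$ by the resolution produced in the previous step) constructs a module of the form $\syz^nM\oplus F$, with $F$ free, as an iterated extension of copies of $\syz^n k$; hence $\syz^nM\oplus F\in\X\subseteq\Y$ because $\X$ is closed under extensions. Since $\Tor$ and $\Ext$ are insensitive to free direct summands, this yields
\[
\Ann\Tor(\X,\Y)\subseteq\Ann\Tor(\syz^n(\fl R),\syz^n(\fl R)),
\]
and the analogous inclusion for $\Ext$. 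Plugging these into Proposition~\ref{4.5} converts any $a$ in the left-hand side into a uniform power of $a$ ($a^{2^{2d}}$ for Tor, $a^{2^{2d}(n+1)}$ for Ext) that annihilates all $\Tor_i^R(\mod R,\mod R)$ or $\Ext_R^i(\mod R,\mod R)$ beyond an explicit bound on $i$, and the prime-avoidance argument of Proposition~\ref{4.6} then forces $a\in\p$ for every $\p\in\Sing(R)$.

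For the nonfree-locus containments it suffices to prove $\V(\cdot)\subseteq\NF(G)$. Fix $\p\notin\NF(G)$; since $\NF(G)=\V(\Ann\Ext_R^1(G,\syz G))$ by the identification $\NF(M)=\supp\Ext_R^1(M,\syz M)$, we can pick $a\in\Ann\Ext_R^1(G,\syz G)\setminus\p$. The main technical obstacle is to upgrade this single-$\Ext^1$ annihilation to the uniform statement that $a$ kills $\Ext_R^i(G,N)$ and $\Tor_i^R(G,N)$ for every $i\ge1$ and every $N\in\mod R$. The plan for this upgrade: $a\in\Ann\Ext_R^1(G,\syz G)$ is equivalent to the class $[F_0]$ of $0\to\syz G\to F_0\to G\to 0$ being killed by $a$, and by the pullback description of Yoneda multiplication this is equivalent to multiplication by $a$ on $G$ factoring through the projective $F_0$, i.e.\ $a\cdot 1_G=0$ in $\lhom(G,G)$; since the syzygy operator is a functor on the stable category that sends projectives to zero, $\syz^i(a\cdot 1_G)=a\cdot 1_{\syz^i G}=0$ in $\lhom(\syz^i G,\syz^i G)$, whence $a\in\Ann\Ext_R^1(\syz^i G,\syz^{i+1}G)$ for every $i\ge 0$. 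A short diagram chase (pushing $[F_0]$ along $f\colon\syz M\to N$ for $\Ext$, and chasing the section $s\colon M\to F_0$ of $a\cdot 1_M$ through the $\Tor$ exact sequence obtained from $0\to\syz M\to F_0\to M\to 0$) shows that any $a\in\Ann\Ext_R^1(M,\syz M)$ kills $\Ext_R^1(M,-)$ and $\Tor_1^R(M,-)$ universally; applying this with $M=\syz^i G$ and combining with the dimension-shift isomorphisms $\Ext_R^{i+1}(G,N)\cong\Ext_R^1(\syz^i G,N)$ and $\Tor_{i+1}^R(G,N)\cong\Tor_1^R(\syz^i G,N)$ completes the upgrade.

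Finally, the ball structure $\X\subseteq[G]_{r+1}$ propagates the annihilation to all of $\X$ by induction on the radius: using additivity of $\Tor$ and $\Ext$, their vanishing on free summands, the index shift for syzygies in the first variable, and the long exact sequences associated to each extension $0\to A\to Z\to B\to 0$ occurring in the ``$\bullet$''-operation (each such extension costs one extra factor of $a$), an easy induction yields $a^{r+1}\cdot\Tor_i^R(X,N)=0=a^{r+1}\cdot\Ext_R^i(X,N)$ for every $X\in[G]_{r+1}\supseteq\X$, every $i\ge 1$ and every $N\in\mod R$. In particular $a^{r+1}\in\Ann\Tor(\X,\Y)\cap\Ann\Ext(\X,\Y)\setminus\p$, which by contrapositive gives $\p\notin\V(\Ann\Tor(\X,\Y))\cup\V(\Ann\Ext(\X,\Y))$ and finishes the proof.
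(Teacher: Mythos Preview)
Your proof is correct and follows essentially the same architecture as the paper's: both use that $\syz^n(\fl R)$ lands in (the additive closure of) $\X\subseteq\Y$ and then invoke Proposition~\ref{4.6} for the left inclusions, and both reduce the right inclusions to $\NF(G)$ via the ball $[G]_{r+1}$. The only difference is packaging: where the paper cites \cite[Proposition~5.1 and Lemma~5.3]{Dao Takahashi dimension} for the equalities $\V(\Ann\Tor(G,\mod R))=\NF(G)$ and $\V(\Ann\Tor([G]_{r+1},\mod R))=\V(\Ann\Tor(G,\mod R))$, you reprove them directly via the stable-category observation $a\cdot 1_G=0$ in $\lhom(G,G)$ and an induction on the radius.
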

\begin{proof}
Let $m = \radius(\X,\Y)$. 
Then there exists an $R$-module $G$ such that 
$\X \subseteq [G]_{m+1} \subseteq \Y$. 
Since $\X$ contains $\Omega_R ^n k$ and is closed under extensions, we have
$\Omega_R ^n (\fl R) \subseteq \add_R \X \subseteq [G]_{m+1} \subseteq \Y$. 
Hence the following inclusion relations hold.
\begin{align*}
\Sing(R)
& \subseteq \V(\Ann \Tor(\Omega_R ^n (\fl R),\Omega_R ^n (\fl R)))
\text{\quad (by Proposition \ref{4.6})} \\
& \subseteq \V(\Ann \Tor(\add_R \X,\Y)) \\
& = \V(\Ann \Tor(\X,\Y)) \\
& \subseteq \V(\Ann \Tor([G]_{m+1},\mod R))  \\
& = \V(\Ann \Tor(G,\mod R)) 
\text{\quad (by \cite[Lemma 5.3]{Dao Takahashi dimension})} \\
& = \NF(G) 
\text{\quad (by \cite[Proposition 5.1]{Dao Takahashi dimension})} \\
& = \NF([G]_{m+1}) \\
& \subseteq \NF(\Y).
\end{align*}
The assertion for $\Ext$ is proved similarly.
\end{proof}
\begin{rmk}
If $R$ is a quasi-excellent local ring, $\mod R$ has a strong generator by \cite[Corollary C]{Dey Lank Takahashi}. 
Hence there exist integers $s,n \geq0$ and an $R$-module $G$ such that $\Omega_R ^s (\mod R) \subseteq [G]_n$. 
In addition, if $R$ satisfies the conditions $(\S_s)$ and $(\G_{s-1})$, then $\Omega_R ^s (\mod R)$ is closed under extensions by \cite[Theorem 2.3]{Matsui Takahashi Tsuchiya} (cf. \cite[Theorem 3.8]{Evans Griffith}). 
Here $(\S_s)$ is Serre's condition, and $(\G_{s-1})$ is a certain condition on the local Gorenstein property of $R$.
Thus, one has $\Sing(R) \subseteq \NF(G)$ by Theorem \ref{main thm}.
\end{rmk}
Let $R$ be a $d$-dimensional local ring.
Note that all the closed subsets 
$\V(\Ann \Ext(\C_0(R), \C_0(R)))$, $\V(\Ann \Ext(\C_n(R), \C_n(R)))$, $\V(\Ann \Ext(\Deep_n(R), \Deep_n(R)))$, and $\V(\Ann \Ext(\C(R), \C(R)))$
coincide for all $0\leq n\leq d$ by \cite[Proposition 2.4 (2)]{Kimura}, and \cite[Lemma 3.8]{Dey Takahashi}.
\begin{cor} \label{cor1 of thm}
Let $n$ be a nonnegative integer. 
Assume that  $\radius(\C_0 (R), \mod_n R) < \infty$. 
Then the dimensions of
$R/\Ann \Tor(\C_n(R), \C_n(R))$, 
$R/\Ann \Ext(\C_n(R), \C_n(R))$, and
$\Sing(R)$ 
are less than or equal to $n$.
\end{cor}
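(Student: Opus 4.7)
The plan is to deduce the corollary from Theorem~\ref{main thm} applied to the pair $(\X,\Y)=(\C_0(R),\mod_n R)$, then chase through the identifications of annihilator loci cited in the remark immediately preceding the corollary.

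First I would verify the hypotheses of Theorem~\ref{main thm}. The radius condition holds by assumption. The subcategory $\C_0(R)$ is resolving, so closed under extensions. To exhibit $\Omega_R^m k \in \C_0(R)$ for some $m\ge 0$, take $m=\depth R$: by the depth lemma one has $\depth \Omega_R^m k\ge \depth R$ at $\m$, while $k_\p=0$ for every $\p\neq\m$ forces $(\Omega_R^m k)_\p$ to be free at such primes, so $\Omega_R^m k\in\C(R)\cap\mod_0 R=\C_0(R)$.

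With the hypotheses in place, Theorem~\ref{main thm} yields
\[
\Sing(R)\ \subseteq\ \V(\Ann\Tor(\C_0(R),\mod_n R))\ \subseteq\ \NF(\mod_n R)
\]
and the analogous chain for $\Ext$. I would then bound $\dim\NF(\mod_n R)\le n$: any prime $\p\in\NF(M)$ with $M\in\mod_n R$ satisfies $\dim R/\p\le\dim\NF(M)\le n$, and since $\NF(\mod_n R)=\bigcup_{M\in\mod_n R}\NF(M)$ is specialization-closed, an earlier remark shows its dimension is at most $n$. In particular $\dim\Sing(R)\le n$ follows at once.

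To lift the Ext bound from $\V(\Ann\Ext(\C_0(R),\mod_n R))$ to the target set $\V(\Ann\Ext(\C_n(R),\C_n(R)))$, I would invoke the equality $\V(\Ann\Ext(\C_n(R),\C_n(R)))=\V(\Ann\Ext(\C_0(R),\C_0(R)))$ from the remark preceding the corollary, together with the inclusion $\V(\Ann\Ext(\C_0(R),\C_0(R)))\subseteq \V(\Ann\Ext(\C_0(R),\mod_n R))$ coming from $\C_0(R)\subseteq\mod_n R$. The $\Tor$ case is handled by exactly the same template, using the $\Tor$ counterpart of that equality. I expect the main obstacle to be confirming (or citing) this $\Tor$ analog of Kimura's and Dey--Takahashi's identifications of annihilator loci; once it is in place, the rest of the proof is a short chain of inclusions.
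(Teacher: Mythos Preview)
Your overall plan—apply Theorem~\ref{main thm} with $(\X,\Y)=(\C_0(R),\mod_n R)$, check that $\Omega_R^{\depth R}k\in\C_0(R)$, and bound $\NF(\mod_n R)$ inside $W_n=\{\p:\dim R/\p\le n\}$—is exactly the paper's. The paper does not route through the Kimura/Dey--Takahashi identification of the Ext loci; it simply writes the chain
\[
\V(\Ann\Tor(\C_n(R),\C_n(R)))\subseteq\V(\Ann\Tor(\C_n(R),\mod_n R))\subseteq\NF(\mod_n R)\subseteq W_n,
\]
citing Theorem~\ref{main thm}, and says the Ext and $\Sing(R)$ statements are handled ``similarly.'' Your version is more explicit about the passage from $\C_0$ to $\C_n$, which is the one delicate point.

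The obstacle you flag for Tor is genuine: the remark preceding the corollary only asserts equality of the loci $\V(\Ann\Ext(\C_i,\C_i))$, and no Tor analog is stated anywhere in the paper. The clean fix—which the paper itself uses a few lines later in Remark~\ref{rmk of cor2}—is the inclusion
\[
\Ann\Ext(\C_n(R),\C_n(R))\subseteq\Ann\Tor(\C_n(R),\C_n(R)),
\]
valid because $\C_n(R)$ is closed under syzygies (\cite[Proposition~3.9]{Dey Takahashi}). Taking $\V(-)$ reverses this to $\V(\Ann\Tor(\C_n(R),\C_n(R)))\subseteq\V(\Ann\Ext(\C_n(R),\C_n(R)))$, after which your Ext argument finishes the Tor case as well. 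With that one line inserted, your proof is complete and matches the paper's in substance.
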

\begin{proof}
We set $t=\depth R$, and $W_n =\{\p \in \spec R \mid \dim R/\p \leq n\}$. 
Since $\C_0(R)$ contains $\Omega_R ^t k$ and is closed under extensions, we have
$\V(\Ann \Tor(\C_n(R), \C_n(R))) 
\subseteq \V(\Ann \Tor(\C_n(R), \mod_n(R)))
\subseteq \NF(\mod_n R) 
\subseteq W_n $
by Theorem \ref{main thm}. 
Hence one has $\dim R/\Ann \Tor(\C_n(R), \C_n(R)) \leq n$. 
The assertions for the dimensions of 
$R/\Ann \Ext(\C_n(R), \C_n(R))$, and 
$\Sing(R)$ are proved similarly.
\end{proof}
Denote by $\assh(R)$ the set of prime ideals $\p$ of $R$ such that the equality $\dim R/\p = \dim R$ holds. Note that $\assh(R)$ is non-empty. 
The next corollary implies that there are few resolving subcategories of finite dimension which contained in $\mod_{d-1} R$ when $R$ is non-Cohen--Macaulay.
\begin{cor} \label{cor2 of thm}
Let $R$ be a $d$-dimensioal local ring. 
Assume that $R$ is not Cohen--Macaulay. 
Then we have
$\radius(\C_0(R), \mod_{d-1} R) = \infty$.
In particular, the dimensions of $\C_i(R)$, and $\Deep_i(R)$ are infinite for all $0\leq i \leq d-1$. 
\end{cor}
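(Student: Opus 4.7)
The plan is to argue by contradiction, using Corollary \ref{cor1 of thm} to translate the finiteness of $\radius(\C_0(R), \mod_{d-1} R)$ into dimension bounds that must be violated when $R$ is not Cohen--Macaulay.

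First I will assume $\radius(\C_0(R), \mod_{d-1} R) < \infty$ and apply Corollary \ref{cor1 of thm} with $n = d - 1$. This gives simultaneously $\dim \Sing(R) \le d - 1$, $\dim R/\Ann \Tor(\C_{d-1}(R), \C_{d-1}(R)) \le d - 1$, and $\dim R/\Ann \Ext(\C_{d-1}(R), \C_{d-1}(R)) \le d - 1$. The goal is to use the hypothesis $\depth R < \dim R = d$ to force at least one of these three dimensions to equal $d$.

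The main obstacle will be locating a prime $\p$ with $\dim R/\p = d$ that witnesses the failure of such a bound. The clean case is when $R$ is non-reduced at some $\p \in \assh(R)$: then $R_\p$ is an Artinian local ring that is not a field, hence not regular, so $\p \in \Sing(R)$ and $\dim \Sing(R) = d$, contradicting the bound immediately. When $R$ is reduced (so every $R_\p$ with $\p \in \assh(R)$ is a field and the $\Sing(R)$-bound becomes trivial), I will need to produce explicit $M, N \in \C_{d-1}(R)$---candidates being $R/\q$ for suitable $\q$, or syzygies thereof---whose positive-degree $\Tor$ or $\Ext$ has support meeting $\assh(R)$, thereby pushing one of the $\Ann$-dimensions up to $d$. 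Precisely this step, converting $\depth R < d$ into a nonzero $\Tor$ or $\Ext$ at a generic prime via modules satisfying the defining depth condition of $\C(R)$, is where I expect the bulk of the technical work to lie.

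Once the main equality is established, the ``in particular'' statement follows by an easy reduction. Suppose, for some $0 \le i \le d - 1$, that $\dim \C_i(R) = r < \infty$. Then $\C_i(R) = [G]_{r+1}$ for some $G \in \C_i(R)$. Since $G \in \C_i(R) \subseteq \mod_i R \subseteq \mod_{d-1} R$ and the nonfree locus is constant on balls, $[G]_{r+1} \subseteq \mod_{d-1} R$. Because $\C_0(R) \subseteq \C_i(R)$, this yields $\radius(\C_0(R), \mod_{d-1} R) \le r < \infty$, contradicting the main equality. The same argument, via $\C_0(R) = \Deep_0(R) \subseteq \Deep_i(R) \subseteq \mod_i R \subseteq \mod_{d-1} R$, handles $\Deep_i(R)$ and completes the proof.
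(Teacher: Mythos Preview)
Your overall strategy---contrapose Corollary~\ref{cor1 of thm} and then deduce the ``in particular'' by comparing radii---matches the paper exactly, and your treatment of the ``in particular'' clause is correct. The gap is precisely the step you flag yourself: in the reduced case you have no argument. You propose to exhibit explicit modules $M,N\in\C_{d-1}(R)$ whose high $\Tor$ or $\Ext$ is supported at some $\p\in\assh(R)$, but you give no construction, and the naive candidates $R/\q$ need not lie in $\C(R)$ at all (the depth inequality $\depth(R/\q)_\p\ge\depth R_\p$ is a real constraint). Since every reduced non-Cohen--Macaulay ring falls into this case, the proposal as written proves nothing beyond the easy non-reduced situation.

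The paper does not attempt such a construction. Instead it invokes \cite[Proposition~2.6]{Kimura}, which asserts directly that if $R$ is not Cohen--Macaulay then $\assh(R)\subseteq\V(\Ann\Ext(\C(R),\C(R)))$; combined with the observation (recorded just before Corollary~\ref{cor1 of thm}) that $\V(\Ann\Ext(\C_{d-1}(R),\C_{d-1}(R)))=\V(\Ann\Ext(\C(R),\C(R)))$, this immediately gives $\dim R/\Ann\Ext(\C_{d-1}(R),\C_{d-1}(R))=d$, contradicting the bound from Corollary~\ref{cor1 of thm}. No case split on reducedness is needed, and no explicit module witness is produced; the work is hidden in Kimura's result. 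Your reduced/non-reduced dichotomy is therefore a detour rather than a route to the same destination, and completing it would essentially require reproving that external proposition.
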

\begin{proof}
Since $R$ is not Cohen--Macaulay, one has 
$\assh(R) \subseteq \V(\Ann \Ext(\C_{d-1}(R), \C_{d-1}(R))) $ 
by \cite[Proposition 2.6]{Kimura}. 
Take an element $\p$ of $\assh(R)$. 
Then $\V(\Ann \Ext(\C_{d-1}(R), \C_{d-1}(R)))$ contains $\p$ and is specialization-closed. 
Thus, we have $\V(\p) \subseteq \V(\Ann \Ext(\C_{d-1}(R), \C_{d-1}(R)))$.
This implies that $\dim R/\Ann \Ext(\C_{d-1}(R), \C_{d-1}(R)) = d$. 
Hence by Corollary \ref{cor1 of thm}, one has
$\radius(\C_0(R), \mod_{d-1} R) = \infty$. \par
Fix $0\leq i \leq d-1$. 
Since the inequalities 
$\dim \C_i(R),\dim \Deep_i(R) \geq \radius(\C_0(R), \mod_{d-1} R)$
hold, the assertion follows from the above consequence.
\end{proof}
\begin{rmk} \label{rmk of cor2}
Let $n$ be a nonnegative integer. 
From the viewpoint of \cite[Theorem 1.2]{Dey Takahashi}, the following four conditions arise for a local ring $R$.
\begin{enumerate}[\rm (i)]
\item
The dimension of $\C_n(R)$ is finite.
\item
The dimension of $R/\Ann \Ext(\C_n(R), \C_n(R))$ is less than or equal to $n$.
\item
The dimension of $R/\Ann \Tor(\C_n(R), \C_n(R))$ is less than or equal to $n$.
\item
The dimension of $\Sing(R)$ is less than or equal to $n$.
\end{enumerate}
As a consequence of the above theorem, the implications $\rm (i) \Rightarrow \rm (ii) \Rightarrow \rm (iii) \Rightarrow \rm (iv)$ hold, and its proof is as follows.
\par
 We set $W_n =\{\p \in \spec R \mid \dim R/\p \leq n\}$ as in the proof of Corollary \ref{cor1 of thm}. \par
$\rm (i) \Rightarrow \rm (ii)$
Since one has
$\dim \C_n(R) = \radius(\C_n(R), \C_n(R)) \geq \radius(\C_0(R), \mod_n R)$, 
we have 
$\dim(R/\Ann \Ext(\C_n(R), \C_n(R))) \leq n$
by Corollary \ref{cor1 of thm}. \par
$\rm (ii) \Rightarrow \rm (iii)$
Since $\C_n(R)$ is closed under syzygies, one has the inclusion
$\Ann \Ext(\C_n(R), \C_n(R)) \subseteq \Ann \Tor(\C_n(R), \C_n(R))$ 
of ideals by \cite[Proposition 3.9]{Dey Takahashi}.
Thus, the assertion holds. \par
$\rm (iii) \Rightarrow \rm (iv)$
Let $t=\depth R$. 
Since one has $\Omega_R ^t (\fl R) \subseteq \C_0(R) \subseteq \C_n(R) $, we have
$\Sing(R) 
\subseteq \V (\Ann \Tor( \Omega_R ^t (\fl R), \Omega_R ^t (\fl R))) 
\subseteq \V (\Ann \Tor( \C_n(R), \C_n(R))  
\subseteq W_n
$.
The first inclusion follows from Proposition \ref{4.6}. 
Hence one has 
$\dim \Sing(R) \leq n$. We are done.
\par
Here we state some remarks for the above four conditions.
\begin{enumerate}[\rm (1)]
\item
Under the assumption of the above condition $\rm(iv)$, one has 
$\C_n(R) = \C(R)$. 
Indeed, let $W_n$ be as in the proof of Corollary \ref{cor1 of thm}. 
Then we have 
$\NF(\C(R)) \subseteq \Sing(R) \subseteq W_n$
by \cite[Remark 3.3 (4)]{Takahashi 2023} and the hypothesis.
Hence the inclusion 
$\C(R) \subseteq \mod_n R $
holds.
Thus we obtain $\C_n(R) = \C(R)$.
\item
If $R$ is a quasi-excellent Cohen--Macaulay local ring, then the implication 
$\rm (iv) \Rightarrow \rm (ii)$
in the above conditions holds. 
Indeed, since $R$ is quasi-excellent, the category $\mod R$ has a strong generator by \cite[Corollary C]{Dey Lank Takahashi}, and the equality $\Sing(R) = \V(\ca(R))$ holds by \cite[Theorem 4.3]{Iyengar Takahashi}. Here $\ca(R)$ is the cohomology annihilator of $R$, that is, defined as follows:
\begin{center}
$\ca(R)= 
\displaystyle\bigcup_{n\geq 0} 
\left( \displaystyle\bigcap_{m\geq n} \Ann \Ext_R ^m(\mod R, \mod R) \right) $.
\end{center}
Thus we have 
$\Sing(R) = \V(\ca(R)) = \V(\Ann \Ext(\cm(R), \cm(R)))$
by \cite[Proposition 2.4 (3)]{Kimura}. 
In addition, if $R$ admits a canonical module, then the implication
$\rm (iv) \Rightarrow \rm (i)$
holds by \cite[Proposition 4.5.1 (2)]{Dey Lank Takahashi}.
\item
Let $d = \dim R$. 
If $R$ is not a Cohen--Macaulay ring, then we have 
$\dim \C_n(R) = \infty$ for all $0\leq n\leq d-1$ 
by Corollary \ref{cor2 of thm}. 
On the other hand, in the case of $n=d$, the above conditions (ii), (iii), and (iv) are trivial. 
Therefore, the implications 
$\rm (i) \Rightarrow \rm (ii) \Rightarrow \rm (iii) \Rightarrow \rm (iv)$
have no meaning for non-Cohen--Macaulay local ring.
\end{enumerate}
\end{rmk}
Here is an application example of the above corollary for a non-Cohen--Macaulay local ring.
\begin{eg} \label{eg of cor2}
Let $K$ be a field and $R= K\llbracket x, y, z \rrbracket /(x^2, xy)$. Then $R$ is dimension $2$ and $\depth$ $1$, so is not Cohen--Macaulay. 
Hence, we have $\dim \C_0(R) =\dim \C_1(R) =\dim \Deep_1(R) = \infty$ by Corollary \ref{cor2 of thm}.
\end{eg}
The above corollary does not deal with the case where $i=d$, that is, on the finiteness of dimensions of $\C(R)$ and $\Deep(R)$. Therefore, the following question arises.
\begin{ques}
Let $R$ be a noetherian local ring. If $R$ is not Cohen--Macaulay, then are the dimensions of the subcategories $\C (R)$ and $\Deep(R)$ infinite?
\end{ques}
Let $d=\dim R$. 
If $\depth R =0$, then we have
$\dim \Deep(R) = \dim (\mod R) = \radius(\mod R) = \infty$ 
by \cite[Theorem 4.4]{Dao Takahashi radius}. 
On the other hand, assume that the dimension of $\Sing(R)$ is less than $d$ (e.g. $R$ is reduced), and we set $n=\dim \Sing(R)$. 
Then one has 
$\dim \C(R) = \dim \C_n(R) = \infty$ 
by Corollary \ref{cor2 of thm}. 
In general case, we do not know any examples where $\C(R)$ or $\Deep(R)$ has finite dimension.
\begin{ac}
The author would like to thank his supervisor Ryo Takahashi for giving many thoughtful questions and helpful discussions. He also thanks Yuya Otake and Kaito Kimura for their valuable comments.
\end{ac}

\end{document}